\newtheorem{theorem}{Theorem}
\newtheorem{conjecture}{Conjecture}
\newtheorem{lemma}{Lemma}
\theoremstyle{remark}
\numberwithin{equation}{section}
\author{Victor J.\ W.\ Guo}
\address{School of Mathematics and Statistics, Huaiyin Normal University,
Huai'an 223300, Jiangsu, People's Republic of China}
\email{jwguo@hytc.edu.cn}
\thanks{The first author was partially supported by the National Natural
Science Foundation of China (grant 11771175).}
\author{Michael J.\ Schlosser}
\address{Fakult\"at f\"ur Mathematik, Universit\"at Wien,
Oskar-Morgenstern-Platz~1, A-1090 Vienna, Austria}
\email{michael.schlosser@univie.ac.at}
\thanks{The second author was partially supported by FWF Austrian Science Fund
grant P 32305.}
\title[Some $q$-supercongruences modulo $\Phi_n(q)^2$ and  $\Phi_n(q)^3$]
{Some $q$-supercongruences modulo the square and cube
  of a cyclotomic polynomial}
\subjclass[2010]{Primary 33D15; Secondary 11A07, 11B65}
\keywords{basic hypergeometric series; supercongruences; $q$-congruences;
cyclotomic polynomial; Andrews' transformation; Gasper's summation.}
\begin{document}

\begin{abstract}
Two $q$-supercongruences of truncated basic hypergeometric series
containing two free parameters are established by employing specific identities
for basic hypergeometric series. The results partly extend two
$q$-supercongruences that were earlier conjectured by the same authors
and involve $q$-supercongruences modulo the square and the cube of a
cyclotomic polynomial. One of the newly proved $q$-supercongruences
is even conjectured to hold modulo the fourth power of a cyclotomic polynomial.
\end{abstract}

\maketitle

\section{Introduction}
In 1914, Ramanujan \cite{Ramanujan} listed a number of representations
of $1/\pi$, including
\begin{align}
\sum_{k=0}^{\infty}(6k+1)\frac{(\frac{1}{2})_k^3}{k!^3 4^k}
=\frac{4}{\pi}, \label{eq:ram}
\end{align}
where $(a)_n=a(a+1)\cdots(a+n-1)$ denotes the Pochhammer symbol.
Ramanujan's formulas gained unprecedented popularity in the 1980's
when they were discovered to provide fast algorithms for calculating
decimal digits of $\pi$. See, for instance, the monograph \cite{BB}
by the Borwein brothers.

In 1997, Van Hamme \cite{Hamme} conjectured 13 intriguing $p$-adic
analogues of Ramanujan-type formulas, such as
\begin{align}
\sum_{k=0}^{(p-1)/2}(6k+1)\frac{(\frac{1}{2})_k^3}{k!^3 4^k}
&\equiv p(-1)^{(p-1)/2}\pmod{p^4}, \label{eq:pram}
\end{align}
where  $p>3$ is a prime. Van Hamme himself supplied proofs for
three of them. Supercongruences like \eqref{eq:pram} are called
Ramanujan-type supercongruences (see \cite{Zud2009}).
The proof of the supercongruence \eqref{eq:pram} was first given
by Long~\cite{Long}. As of today, all of Van Hamme's 13
supercongruences have been confirmed by various techniques
(see \cite{OZ,Swisher}).

In recent years, $q$-congruences and $q$-supercongruences
have been established by different authors (see, for example,
\cite{Gorodetsky,Guo-rima,Guo-c2,GS19fifth,GS19,GSnew,GS20,GS1,GS,GuoZu,GuoZu2,LW,Liu,LP,NP2,Straub,WY,WYu,Zu19}).
In particular, the present authors \cite{GS19} proved that,
for any odd integer $d\geqslant 5$,
\begin{equation}
\sum_{k=0}^{n-1}[2dk+1]\frac{(q;q^d)_k^d}{(q^d;q^d)_k^d}q^{d(d-3)k/2}
\equiv
\begin{cases} 0\pmod{\Phi_n(q)^2}, &\text{if $n\equiv -1\pmod{d}$,}\\[5pt]
0\pmod{\Phi_n(q)^3}, &\text{if $n\equiv -1/2\pmod{d}$.}
\end{cases}  \label{eq:old-1}
\end{equation}
Here and in what follows, we adopt the standard $q$-notation:
$[n]=1+q+\cdots+q^{n-1}$ is the {\em $q$-integer};
$(a;q)_n=(1-a)(1-aq)\cdots (1-aq^{n-1})$ is the {\em $q$-shifted factorial},
with the compact notation
$(a_1,a_2,\ldots,a_m;q)_n=(a_1;q)_n (a_2;q)_n\cdots (a_m;q)_n$
used for their products; and $\Phi_n(q)$ denotes the $n$-th
{\em cyclotomic polynomial} in $q$, which may be defined as
\begin{align*}
\Phi_n(q)=\prod_{\substack{1\leqslant k\leqslant n\\ \gcd(k,n)=1}}(q-\zeta^k),
\end{align*}
where $\zeta$ is an $n$-th primitive root of unity.

We should point out that the $q$-congruence \eqref{eq:old-1} does not
hold for $d=3$. The present authors \cite{GS19} also established the following
companion of \eqref{eq:old-1}:
for any odd integer $d\geqslant 3$ and integer $n>1$,
\begin{equation}
\sum_{k=0}^{n-1}[2dk-1]\frac{(q^{-1};q^d)_k^d}{(q^d;q^d)_k^d}
q^{d(d-1)k/2}
\equiv
\begin{cases} 0\pmod{\Phi_n(q)^2}, &\text{if $n\equiv 1\pmod{d}$,}\\[5pt]
0\pmod{\Phi_n(q)^3}, &\text{if $n\equiv 1/2\pmod{d}$.}
\end{cases} \label{eq:old-2}
\end{equation}
They also proposed the following conjectures \cite[Conjectures 1 and 2]{GS19},
which are generalizations of \eqref{eq:old-1} and \eqref{eq:old-2}.
\begin{conjecture}\label{conj-1}
Let $d\geqslant 5$ be an odd integer. Then
\begin{equation*}
\sum_{k=0}^{n-1}[2dk+1]\frac{(q;q^d)_k^d}{(q^d;q^d)_k^d}q^{d(d-3)k/2}
\equiv
\begin{cases} 0\pmod{\Phi_n(q)^3}, &\text{if $n\equiv -1\pmod{d}$,}\\[5pt]
0\pmod{\Phi_n(q)^4}, &\text{if $n\equiv -1/2\pmod{d}$.}
\end{cases}
\end{equation*}
\end{conjecture}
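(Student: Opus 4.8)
The natural route is the method of creative microscoping, carried out with \emph{two} free parameters so as to produce the extra powers of $\Phi_n(q)$ beyond those already furnished by \eqref{eq:old-1}. The starting observation is that the sum in Conjecture~\ref{conj-1} is the $a=b=1$ instance of the very-well-poised family
\begin{equation*}
S_n(a,b)=\sum_{k=0}^{n-1}[2dk+1]\,
\frac{(aq,bq;q^d)_k\,(q;q^d)_k^{d-2}}{(q^d/a,q^d/b;q^d)_k\,(q^d;q^d)_k^{d-2}}\,q^{d(d-3)k/2},
\end{equation*}
in which two of the $d-1$ well-poised pairs (with well-poised parameter $q$ and base $q^d$) have been deformed by $a$ and $b$; indeed both numerator pairs and the ambient $(q;q^d)_k^{d-2}$ collapse correctly to $(q;q^d)_k^d/(q^d;q^d)_k^d$ at $a=b=1$. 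Since \eqref{eq:old-1} is the $a=b=1$ shadow of this family and already records divisibility by $\Phi_n(q)^2$ (for $n\equiv-1$) and $\Phi_n(q)^3$ (for $n\equiv-1/2$), it serves as a consistency check for the stronger, parameter-enhanced congruence aimed at here.

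The analytic engine would be the two identities flagged in the keywords. First, Andrews' transformation is applied to rewrite the $d$-fold very-well-poised series $S_n(a,b)$ as a series carrying fewer free parameters; a terminating summation of Gasper type then evaluates the resulting series in closed form. The target is a congruence of the shape
\begin{equation*}
S_n(a,b)\equiv C_n(a,b)\pmod{(1-aq^n)(a-q^n)(1-bq^n)(b-q^n)},
\end{equation*}
read as an identity in $q$ with $a,b$ generic, where $C_n(a,b)$ is the explicit product returned by the summation. The four moduli are the microscoping factors: each vanishes modulo $\Phi_n(q)$ once $q^n$ is specialized to an $n$-th root of unity, and it is their confluence at $a=b=1$ that will manufacture the powers of $\Phi_n(q)$.

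It then remains to set $a=b=1$ and to count orders of vanishing. Each pair $(1-aq^n)(a-q^n)$ degenerates to $(1-q^n)^2$, so the quartic modulus would naively contribute $\Phi_n(q)^4$. Whether this survives depends on the residue class: the plan is to show that $C_n(a,b)$ cancels exactly one factor when $n\equiv-1\pmod d$, leaving $\Phi_n(q)^3$, while all four persist when $n\equiv-1/2\pmod d$, yielding $\Phi_n(q)^4$. The one-power gap between the two cases thus comes entirely from the order of vanishing of the Gasper closed form, mirroring the gap already present in \eqref{eq:old-1}. Throughout, the tail of the truncation is harmless: in these residue classes one has $q^{1+jd}\equiv1\pmod{\Phi_n(q)}$ for a suitable $j<n$, so the numerator $(q;q^d)_k^{d-2}$ carries the high power $\Phi_n(q)^{d-2}$ for $k>j$ (recall $d\geqslant5$), which permits replacing the truncation by the natural endpoint to which the summation applies.

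The principal obstacle is the passage to the fourth power of $\Phi_n(q)$ in the case $n\equiv-1/2\pmod d$. Securing it forces the closed form $C_n(a,b)$ to vanish at $a=b=1$ to exactly one order higher than in the case $n\equiv-1\pmod d$, and pinning down this residue-dependent order of vanishing---rather than over- or under-counting the confluence of the four microscoping factors---is the delicate point. It is quite possible that the transformations at hand rigorously deliver only $\Phi_n(q)^3$ in both cases while the sharper fourth power survives merely as a conjecture, which would match the expectation that one of the two $q$-supercongruences extends, conjecturally, one power further.
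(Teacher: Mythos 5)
Your proposal is a strategy outline rather than a proof, and its central step is missing. The entire content of a creative-microscoping argument lies in actually establishing the parametric congruence
\begin{equation*}
S_n(a,b)\equiv C_n(a,b)\pmod{(1-aq^n)(a-q^n)(1-bq^n)(b-q^n)},
\end{equation*}
which requires evaluating (or at least controlling) $S_n(a,b)$ at each of the four specializations $a=q^{\pm n}$, $b=q^{\pm n}$ and exhibiting the closed form $C_n(a,b)$. You produce neither: no terminating summation applicable to your two-parameter deformation is identified, and there is a structural reason to doubt one exists. The method that works here depends on \emph{all} $d$ numerator parameters being equal to $q^r$: after Andrews' transformation \eqref{andrews} one uses Lemma~\ref{lem:mod-square} to pair factors as $(q^{r-\alpha n},q^{r+\alpha n};q^d)_k\equiv(q^r;q^d)_k^2$ and then invokes the Karlsson--Minton-type vanishing of Lemma~\ref{lem:ms=0}, which is exactly the very-well-poised balanced configuration required by Gasper's summation \eqref{eq:gasper}. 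Deforming two pairs by generic $a$ and $b$ destroys this configuration, so the "Gasper closed form $C_n(a,b)$" on which your whole order-of-vanishing count rests is not available. Your final count (four confluent factors minus a residue-dependent cancellation in $C_n$) is therefore resting on an object you have not shown to exist.

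For comparison, the paper does not prove the conjecture either; it proves only the first case, as the $r=1$ instance of Theorem~\ref{thm:1}, and by a genuinely different route: no microscoping parameters are introduced at all. Instead the undeformed truncated sum is written as a terminating very-well-poised series, Andrews' transformation converts it into a prefactor times a multi-sum, the prefactor is shown to contribute one factor of $\Phi_n(q)$, and the multi-sum is shown to vanish modulo $\Phi_n(q)^2$ by combining Lemma~\ref{lem:mod-square} with Lemma~\ref{lem:ms=0}; together with divisibility by $[n]$ from Lemma~\ref{lem:3} this gives the modulus $[n]\Phi_n(q)^2\subseteq\Phi_n(q)^3$. The generalization the paper exploits is the shift $q\mapsto q^r$ in the numerator (keeping the well-poised structure intact), not a deformation by $a$ and $b$. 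The second case of the conjecture (modulo $\Phi_n(q)^4$) is proved by nobody: Theorem~\ref{thm:2} only reaches $[n]\Phi_n(q)$ there, and the stronger statement is explicitly left as a conjecture. Your closing caveat that the fourth power may be out of reach is thus accurate, but it does not repair the gap in the part you set out to prove.
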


\begin{conjecture}\label{conj-2}
Let $d\geqslant 5$ be an odd integer and let $n>1$. Then
\begin{equation*}
\sum_{k=0}^{n-1}[2dk-1]\frac{(q^{-1};q^d)_k^d}{(q^d;q^d)_k^d}q^{d(d-1)k/2}
\equiv
\begin{cases} 0\pmod{\Phi_n(q)^3}, &\text{if $n\equiv 1\pmod{d}$,}\\[5pt]
0\pmod{\Phi_n(q)^4}, &\text{if $n\equiv 1/2\pmod{d}$.}
\end{cases}
\end{equation*}
\end{conjecture}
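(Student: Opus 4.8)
The plan is to prove Conjecture~\ref{conj-2} by the method of \emph{creative microscoping}, carrying along \emph{two} auxiliary parameters $a,b$. Concretely, I would introduce the family
\begin{equation*}
S_n(a,b)=\sum_{k=0}^{n-1}[2dk-1]
\frac{(aq^{-1};q^d)_k(q^{-1}/a;q^d)_k(bq^{-1};q^d)_k(q^{-1}/b;q^d)_k(q^{-1};q^d)_k^{d-4}}
{(aq^d;q^d)_k(q^d/a;q^d)_k(bq^d;q^d)_k(q^d/b;q^d)_k(q^d;q^d)_k^{d-4}}\,q^{d(d-1)k/2},
\end{equation*}
which collapses to the left-hand side of Conjecture~\ref{conj-2} at $a=b=1$. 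The summand retains a very-well-poised shape (quasi-period $q^d$, well-poising parameter $q^{-1}$), and exactly four of the $d$ factors are consumed by the reciprocal pairs $(a,1/a)$ and $(b,1/b)$; the requirement that at least one ``spectator'' factor $(q^{-1};q^d)_k/(q^d;q^d)_k$ survive, so that the series still lies in the scope of the summation tools below, is what forces $d-4\geqslant1$, i.e.\ $d\geqslant 5$, matching the hypothesis. The goal is a congruence for $S_n(a,b)$ modulo a product of pairwise coprime factors, after which $a,b\to1$ produces the extra powers of $\Phi_n(q)$.

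The first ingredient is divisibility by $(1-aq^n)(a-q^n)$, viewed as a polynomial in $a$ (with $b$ kept free). The summand is invariant under $a\mapsto 1/a$, so it suffices to treat $a=q^{-n}$. At this value the truncated sum becomes a terminating very-well-poised series; I would apply \emph{Andrews' transformation} to recast it in a shape to which \emph{Gasper's summation} applies, and read off that the resulting closed form vanishes. Together with the $a\mapsto1/a$ symmetry this gives $S_n(a,b)\equiv0$ at $a=q^{\pm n}$, hence $(1-aq^n)(a-q^n)\mid S_n(a,b)$. The role of the pair $(b,1/b)$ is precisely to supply the additional free parameter that the Andrews--Gasper step needs in order to produce a genuinely summable series at $a=q^{-n}$; it is carried along untouched until the very end.

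The second ingredient is the generic cyclotomic divisibility. Evaluating at $q=\zeta$ a primitive $n$-th root of unity, I would show by a root-of-unity/symmetry argument on the summand that $\Phi_n(q)\mid S_n(a,b)$ for all $a,b$. Since $\Phi_n(q)$ depends only on $q$ while $(1-aq^n)(a-q^n)$ is primitive in $a$, the two are coprime, so the standard Guo--Zudilin lemma yields $S_n(a,b)\equiv0\pmod{\Phi_n(q)(1-aq^n)(a-q^n)}$. Setting $a=b=1$ and using $\Phi_n(q)^2\mid(1-q^n)^2$ gives the congruence modulo $\Phi_n(q)^3$: in particular the first case, $n\equiv1\pmod d$, of Conjecture~\ref{conj-2} is established, and the weaker modulus $\Phi_n(q)^3$ is recovered in the second case as well.

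The hard part is the additional power needed when $n\equiv1/2\pmod d$. Here I would prove the \emph{enhanced} generic bound $\Phi_n(q)^2\mid S_n(a,b)$ (for all $a,b$) under this residue condition; combined with $(1-aq^n)(a-q^n)$ and $a=b=1$ this upgrades the modulus to $\Phi_n(q)^4$. The plan is to show that when $2n\equiv1\pmod d$ an extra factor vanishes at $q=\zeta$: concretely, that the quotient $S_n(a,b)/\bigl[(1-aq^n)(a-q^n)\bigr]$ is again divisible by $\Phi_n(q)$. I expect this to follow either from differentiating Gasper's summation in the microscoping parameter, so as to detect a double zero at $q=\zeta$, or from a second pass of the creative microscope using $b$, exploiting that the balancing condition of Gasper's summation degenerates exactly when $2n\equiv1\pmod d$. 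Controlling this order of vanishing uniformly in the two free parameters is the main obstacle, and it is precisely the step where both the residue hypothesis $n\equiv1/2\pmod d$ and the condition $d\geqslant5$ are indispensable.
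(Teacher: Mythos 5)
Your plan has two genuine gaps, one fatal to the mechanism itself. First, a structural point: the statement you are trying to prove is a \emph{conjecture} in this paper. The authors only prove its first case, $n\equiv1\pmod d$, as the $r=-1$, $M=n-1$ instance of Theorem~\ref{thm:1} (which gives $[n]\Phi_n(q)^2$, hence $\Phi_n(q)^3$); for $n\equiv1/2\pmod d$ they obtain only $[n]\Phi_n(q)$ (Theorem~\ref{thm:2}) and explicitly leave the stronger modulus open. Your own treatment of that second case is admittedly speculative (``I expect this to follow either from differentiating Gasper's summation \dots or from a second pass''), so it is a hope, not a proof --- which is consistent with the problem being open, but means the proposal cannot stand as a proof of the statement.

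Second, and more concretely, your key microscoping step fails. For $(1-aq^n)(a-q^n)\mid S_n(a,b)$ you need $S_n(q^{-n},b)=0$, but at $a=q^{-n}$ the numerator factors become $(q^{-n-1};q^d)_k$ and $(q^{n-1};q^d)_k$: the latter never vanishes, and the former vanishes only if $d\mid n+1$, which is false when $n\equiv1\pmod d$ (then $n+1\equiv2\pmod d$, $d\geqslant5$) and likewise fails when $n\equiv1/2\pmod d$. So the sum truncated at $k=n-1$ does not become a naturally terminating series, Andrews' transformation \eqref{andrews} (which requires a $q^{-N}$-type parameter) does not apply, and there is no reason for the value to vanish. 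Worse, your stated purpose for the second parameter $b$ is backwards: Gasper's summation \eqref{eq:gasper}/\eqref{eq:vwp-km} is of Karlsson--Minton type and requires each numerator/denominator pair to differ by an \emph{integral} power of the base $q^d$, whereas your pair $(bq^{-1};q^d)_k/(bq^d;q^d)_k$ has ratio $q^{-d-1}$, never an integral power of $q^d$; a generic $b$ thus puts the series outside the scope of Gasper's formula rather than inside it. The paper's actual route for the case it can prove uses no free parameters at all: it applies \eqref{andrews} directly to the truncated sum (which terminates because $n\equiv-r\pmod d$ makes $(q^{r-(d-1)n};q^d)_k$ vanish beyond $k=(dn-n-r)/d$), extracts one factor $\Phi_n(q)$ from the prefactor, and then gets $\Phi_n(q)^2$ from the multisum by invoking Lemma~\ref{lem:mod-square} --- which manufactures the needed $q^{r\pm\alpha n}$ pairs only as a \emph{congruence} modulo $\Phi_n(q)^2$, precisely so that the integrality conditions ($n_i=(n+r-d)/d$ etc.) of Lemma~\ref{lem:ms=0} hold. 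Your unproven generic claim $\Phi_n(q)\mid S_n(a,b)$ would also need an argument (at $a=b=1$ it is Lemma~\ref{lem:3}, but uniformity in $a,b$ is not automatic). If you want a parameter-based attack, the parameters must be inserted so that the specialization $a=q^{\pm n}$ restores both the termination and the Karlsson--Minton integrality; as written, neither survives.
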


$q$-Supercongruences such as those above (modulo
a third and even fourth power of a cyclotomic polnomial) are rather special.
In fact, concrete results for truncated basic hypergeometric sums
being congruent to $0$ modulo a high power of a cyclotomic polynomial
are very rare. See \cite{GS19fifth,GS20,GSnew,GS1,GW,LW}
for recent papers featuring such results.
The main goal of this paper is to add two complete two-parameter families
of $q$-supercongruences to the list of such $q$-supercongruences
(see Theorems~\ref{thm:1} and \ref{thm:2}).

We shall prove that the respective first cases of
Conjectures~\ref{conj-1} and \ref{conj-2} are true
by establishing the following more general result.
\begin{theorem}\label{thm:1}
Let $d$ and $r$ be odd integers satisfying $d\geqslant 5$, $r\leqslant d-4$
(in particular, $r$ may be negative) and $\gcd(d,r)=1$.
Let $n$ be an integer such that $n\geqslant d-r$
and $n\equiv-r\pmod{d}$. Then
\begin{equation}
  \sum_{k=0}^{M}[2dk+r]\frac{(q^r;q^d)_k^d}{(q^d;q^d)_k^d}q^{d(d-r-2)k/2}
  \equiv 0 \pmod{[n]\Phi_{n}(q)^2},
\label{eq:thm1}
\end{equation}
where $M=(dn-n-r)/d$ or $n-1$.
\end{theorem}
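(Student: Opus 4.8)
\emph{Proof proposal.} The plan is to prove \eqref{eq:thm1} by creative microscoping, the crucial input being an evaluation of a terminating very-well-poised series by the two identities named in the introduction. Since $[2dk+r]=[r](1-q^{r+2dk})/(1-q^r)$, the sum in \eqref{eq:thm1} is $[r]$ times a truncated very-well-poised series in base $q^d$ with parameter $q^r$. I deform two of the $d$ copies of $(q^r;q^d)_k$ (and the matching copies of $(q^d;q^d)_k$) by a free parameter $a$, setting
\begin{equation*}
F_n(a):=\sum_{k=0}^{M}[2dk+r]\,
\frac{(aq^r,q^r/a;q^d)_k\,(q^r;q^d)_k^{d-2}}
{(aq^d,q^d/a;q^d)_k\,(q^d;q^d)_k^{d-2}}\,q^{d(d-r-2)k/2},
\end{equation*}
so that $F_n(1)$ equals the left-hand side of \eqref{eq:thm1} and $F_n(a)=F_n(1/a)$. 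Because $\Phi_n(q)^3$ and $\prod_{1<m<n,\,m\mid n}\Phi_m(q)$ are coprime, proving \eqref{eq:thm1} is equivalent to establishing the two divisibilities $\Phi_n(q)^3\mid F_n(1)$ (the first case of Conjecture~\ref{conj-1}) and $[n]\mid F_n(1)$.

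Everything rests on the following termination. With $\ell=(n+r)/d$ one has $dM=d(n-\ell)=dn-n-r$, so at $a=q^{-n(d-1)}$ the upper parameter $aq^r$ equals $q^{-dM}$; thus $(aq^r;q^d)_k$ vanishes for $k>M$ and $F_n(q^{-n(d-1)})$ is a \emph{terminating} very-well-poised ${}_{d+3}\phi_{d+2}$. I would apply Andrews' transformation to this series (in its $(d-1)/2$-fold version, which applies because $d+3$ is even), producing a multiple sum, and then invoke Gasper's summation to evaluate the remaining free series as an explicit product of $q$-shifted factorials. I expect that under the hypotheses $n\equiv-r\pmod d$ and $r\leqslant d-4$ this product carries a vanishing factor, giving
\begin{equation*}
F_n(q^{-n(d-1)})=0,
\end{equation*}
and hence, by $a\mapsto1/a$, also $F_n(q^{n(d-1)})=0$. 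The evaluation point is chosen so that $q^{-n(d-1)}\equiv1\pmod{\Phi_n(q)}$, which is what ties it back to the value $a=1$ we actually want.

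From this single evaluation both divisibilities follow. For $0\leqslant k\leqslant M<n$ the quantities $(q^d;q^d)_k,(aq^d;q^d)_k,(q^d/a;q^d)_k$ are coprime to $\Phi_n(q)$ (since $\gcd(d,n)=1$ forbids $n\mid dj$ for $1\leqslant j\leqslant k$), so $1-aq^{n(d-1)}$ and $a-q^{n(d-1)}$ are coprime in the relevant localized ring, and the two vanishings give
\begin{equation*}
F_n(a)\equiv0\pmod{(1-aq^{n(d-1)})(a-q^{n(d-1)})}.
\end{equation*}
Setting $a=1$ and using $\Phi_n(q)\mid1-q^{n(d-1)}$ gives $\Phi_n(q)^2\mid F_n(1)$; writing $G_n(a):=F_n(a)/\big[(1-aq^{n(d-1)})(a-q^{n(d-1)})\big]$ and showing $G_n(1)\equiv0\pmod{\Phi_n(q)}$ — which should again be read off from the Gasper product — upgrades this to $\Phi_n(q)^3\mid F_n(1)$. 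For the factor $[n]$ I would instead use $q^{-n(d-1)}\equiv1\pmod{[n]}$: since $F_n(1)=F_n(1)-F_n(q^{-n(d-1)})=(1-q^{-n(d-1)})\widetilde H$ with $[n]\mid1-q^{-n(d-1)}$, one gets $[n]\mid F_n(1)$ as soon as $\widetilde H$ is shown to be regular at the primitive $m$-th roots of unity for all $m\mid n$. Finally, both admissible values of $M$ are treated together, because at $a=q^{\pm n(d-1)}$ every summand with index $(dn-n-r)/d<k\leqslant n-1$ vanishes through its factor $(q^{-dM};q^d)_k$.

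The step I expect to be the main obstacle is the evaluation of the terminating ${}_{d+3}\phi_{d+2}$ in the second paragraph: carrying out Andrews' transformation and Gasper's summation explicitly, and then verifying that $n\equiv-r\pmod d$ together with $r\leqslant d-4$ is exactly what forces a factor of the resulting product to vanish. Extracting the third power of $\Phi_n(q)$ requires tracking that product to one further order at $a=1$ (to see $\Phi_n(q)\mid G_n(1)$), while establishing $[n]\mid F_n(1)$ requires controlling the poles of $\widetilde H$ at the lower roots of unity; both are delicate, and the entire weight of the theorem sits on getting that one closed-form evaluation, and its local behaviour near $a=1$, correct.
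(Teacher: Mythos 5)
Your strategy hinges on a single claim that is both unproven and, structurally, almost certainly false: that the one-parameter deformation $F_n(a)$ vanishes exactly at $a=q^{\pm n(d-1)}$. At $a=q^{-n(d-1)}$ the terminating series has one numerator pair shifted to $q^{r-(d-1)n},q^{r+(d-1)n}$, but the remaining $d-2$ numerator parameters are still equal to $q^{r}$. After Andrews' transformation, an exact evaluation of the resulting multi-sum would require Gasper's very-well-poised Karlsson--Minton summation \eqref{eq:gasper}/\eqref{eq:vwp-km}, and that summation demands that \emph{every} numerator pair $(e_j,aq^{n_j+1}/e_j)$ have product $aq^{n_j+1}$ with $n_j$ a non-negative integer. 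A pair $(q^{r},q^{r})$ in base $q^{d}$ has product $q^{2r}$, and $q^{2r}=q^{r+d(n_j+1)}$ would force $d\mid r$, contradicting $\gcd(d,r)=1$. So the untouched pairs break the Karlsson--Minton structure, there is no closed-form evaluation to ``read off a vanishing factor'' from, and no reason for $F_n(q^{-n(d-1)})$ to be $0$. A single parameter $a$ placed in one slot cannot simultaneously shift all $(d-1)/2$ pairs by the distinct multiples of $n$ that the structure requires. Everything downstream (the double root at $a=1$, the factor $G_n$, the quantity $\widetilde H$ for the $[n]$ part) collapses with this claim; you also leave the $\Phi_n(q)^3$ upgrade and the pole control for $\widetilde H$ as acknowledged gaps.

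The paper's proof avoids deformation entirely and the difference is instructive. It pads the \emph{undeformed} sum with cancelling numerator/denominator pairs (including $q^{d+(d-1)n}$ and $q^{r-(d-1)n}$, which are literally present but cancel), applies Andrews' transformation \eqref{andrews}, and extracts one factor of $\Phi_n(q)$ from the resulting prefactor $(q^{d+r};q^d)_{(dn-n-r)/d}$ (which contains $1-q^{(d-1)n}$), after checking that the denominators contribute no such factor. The remaining two powers of $\Phi_n(q)$ come from the multi-sum: using Lemma~\ref{lem:mod-square} one replaces each pair $(q^{r},q^{r})$ by $(q^{r+\alpha n},q^{r-\alpha n})$ with a \emph{position-dependent} $\alpha$ --- this is exactly the step that manufactures the Karlsson--Minton structure --- whereupon Lemma~\ref{lem:ms=0} (Andrews combined with Gasper) shows the modified multi-sum is identically zero. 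The vanishing is thus a congruence modulo $\Phi_n(q)^2$ applied to the transformed sum, not an exact identity at a special value of a deformation parameter; the $[n]$ divisibility is handled separately by the elementary Lemma~\ref{lem:3}. If you want to salvage a microscoping version, you would need roughly $(d-1)/2$ independent deformation parameters, one per pair, which is a very different (and much heavier) argument than the one you sketch.
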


We shall also prove the following $q$-supercongruences.
\begin{theorem}\label{thm:2}
Let $d$ and $r$ be odd integers satisfying $d\geqslant 5$, $r\leqslant d-4$
(in particular, $r$ may be negative) and $\gcd(d,r)=1$.
Let $n$ be an integer such that $n\geqslant (d-r)/2$
and $n\equiv-r/2\pmod{d}$. Then
\begin{equation}
  \sum_{k=0}^{M}[2dk+r]\frac{(q^r;q^d)_k^d}{(q^d;q^d)_k^d}q^{d(d-r-2)k/2}
  \equiv 0 \pmod{[n]\Phi_{n}(q)},
\label{eq:thm2}
\end{equation}
where $M=(dn-2n-r)/d$ or $n-1$.
\end{theorem}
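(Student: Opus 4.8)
The plan is to prove \eqref{eq:thm2} by the method of creative microscoping: I insert an auxiliary parameter $a$, evaluate the resulting parametrized sum in closed form, and then read off the required divisibility upon setting $a=1$. Concretely, consider the symmetrized truncated series
\begin{equation*}
S_M(a)=\sum_{k=0}^{M}[2dk+r]\,\frac{(aq^r,q^r/a;q^d)_k\,(q^r;q^d)_k^{d-2}}{(aq^d,q^d/a;q^d)_k\,(q^d;q^d)_k^{d-2}}\,q^{d(d-r-2)k/2},
\end{equation*}
which collapses to the left-hand side of \eqref{eq:thm2} at $a=1$. Read in the base $q^d$ with well-poised parameter $q^r$, the $q$-integer $[2dk+r]$ supplies, up to the nonzero constant $1-q$, the very-well-poised factor $1-q^{2dk+r}$, so $S_M(a)$ is a very-well-poised basic hypergeometric series of ${}_{d+2}\phi_{d+1}$ type. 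The hypothesis $r\leqslant d-4$ guarantees that its argument $q^{d(d-r-2)/2}$ carries a positive power of $q^d$, which is exactly the balancing needed for the classical transformation formulas to apply. Note also that $\gcd(d,r)=1$ together with $d\mid 2n+r$ forces $\gcd(d,n)=1$, a fact used repeatedly below.

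The heart of the argument is a parametrized evaluation of $S_M(a)$. I would apply Andrews' transformation, the higher-order analogue of Watson's $\,{}_8\phi_7\to{}_4\phi_3\,$ formula, to rewrite the very-well-poised series as a balanced multiple sum, and then evaluate the inner series by Gasper's summation. The congruence $n\equiv-r/2\pmod d$, that is $d\mid 2n+r$, is what aligns the parameters so that the truncation relevant to \eqref{eq:thm2} becomes a genuinely terminating series to which these identities apply; the coprimality $\gcd(d,r)=1$ prevents any accidental degeneration of the parameters. The outcome is a closed form for $S_M(a)$ as a product of $q$-shifted factorials in which the dependence on $q^n$ is explicit.

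From this closed form the cyclotomic content can be read off. The product is divisible by $(1-aq^n)(a-q^n)$ as a function of $a$, so specializing $a=1$ shows that $(1-q^n)^2$ divides $S_M(1)$; here one checks that $a=1$ introduces no spurious pole, since $\gcd(d,n)=1$ keeps $(q^d;q^d)_k$ coprime to $\Phi_n(q)$ for $k<n$. Since
\begin{equation*}
[n]\,\Phi_n(q)=\Phi_n(q)^2\!\!\prod_{\substack{e\mid n\\ 1<e<n}}\!\!\Phi_e(q)
\qquad\text{divides}\qquad
(1-q^n)^2=\prod_{e\mid n}\Phi_e(q)^2,
\end{equation*}
the divisibility of $S_M(1)$ by $(1-q^n)^2$ already yields the asserted congruence modulo $[n]\Phi_n(q)$.

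It remains to reconcile the two admissible values of $M$. Writing $m=(2n+r)/d$, so that the first choice is $M=n-m$, the unique solution of $r+dj\equiv0\pmod e$ shows that $(q^r;q^d)_k$ acquires a factor of $\Phi_e(q)$ for every divisor $e\mid n$ once $k$ exceeds a threshold which, thanks to $d\geqslant5$ and $r\leqslant d-4$, is at most $n-m$. Hence every term with $n-m<k\leqslant n-1$ is by itself divisible by $\Phi_n(q)^2\prod_{e\mid n,\,1<e<n}\Phi_e(q)=[n]\Phi_n(q)$, so the two truncations agree modulo $[n]\Phi_n(q)$ and the statement holds for both. The principal obstacle is the parametrized evaluation of the middle step: one must introduce the parameter $a$ and combine Andrews' transformation with Gasper's summation so that the resulting product visibly carries the two factors $1-aq^n$ and $a-q^n$, and it is precisely there that all the arithmetic hypotheses on $d$, $r$ and $n$ are consumed.
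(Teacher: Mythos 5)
Your proposal hinges on a step that is asserted rather than proved, and that step is the entire difficulty: you claim that after inserting the parameter $a$, a combination of Andrews' transformation and Gasper's summation produces ``a closed form for $S_M(a)$ as a product of $q$-shifted factorials'' from which the factors $1-aq^n$ and $a-q^n$ can be read off. No such closed form is available here. Andrews' transformation converts the very-well-poised series into a \emph{multiple sum} (times a prefactor), not a product, and Gasper's very-well-poised Karlsson--Minton summation \eqref{eq:gasper} applies only when the parameters sit in the special Karlsson--Minton configuration $e_j,\ aq^{n_j+1}/e_j$ --- which is not the configuration of $S_M(a)$, and which in this paper is used only to show that a certain \emph{derived} multi-sum vanishes (Lemma~\ref{lem:ms=0}), not to evaluate the original sum. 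In fact the actual proof of Theorem~\ref{thm:2} never introduces a parameter $a$ and never evaluates anything: it applies Andrews' transformation \eqref{andrews} directly to the truncated sum with $M=(dn-2n-r)/d$, observes that the resulting prefactor $(q^{d+r},q^{-(d-2)n};q^d)_{(dn-2n-r)/d}$ already carries two factors of $\Phi_n(q)$, and then shows (using Lemma~\ref{lem:4} on the arithmetic progression starting at $(d+r)/2$, plus a pigeonhole argument on $j_1+\dots+j_{m-1}$) that the denominators occurring in the multi-sum are coprime to $\Phi_n(q)$. Without a concrete mechanism producing your product formula, the heart of your argument is missing.

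Two further points. First, even granting a factorization of $S_M(a)$ divisible by $(1-aq^n)(a-q^n)$, your conclusion that $(1-q^n)^2$ divides $S_M(1)$ --- and hence that all cyclotomic factors $\Phi_e(q)$, $e\mid n$, $1<e<n$, are accounted for in one stroke --- needs the cofactor at $a=1$ to have denominator coprime to every such $\Phi_e(q)$; you only check coprimality to $\Phi_n(q)$, and indeed $(q^d;q^d)_k$ for $k\leqslant n-1$ is \emph{not} coprime to $\Phi_e(q)$ for proper divisors $e$ of $n$. The paper avoids this by treating the modulus $[n]$ separately and elementarily (Lemma~\ref{lem:3}) and only extracting $\Phi_n(q)^2$ from the transformation. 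Second, your reconciliation of the two truncations asserts without verification that the threshold index for each $\Phi_e(q)$ is at most $n-m$; the paper's (and the cleaner) argument is that each term with $k>(dn-2n-r)/d$ is divisible by $\Phi_n(q)^d$ because $(q^r;q^d)_k$ then contains $1-q^{(d-2)n}$, while the difference of the two truncated sums is divisible by $[n]$ by Lemma~\ref{lem:3}, and one takes the least common multiple.
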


The following generalization of the respective second cases
of Conjectures~\ref{conj-1} and \ref{conj-2} should be true.

\begin{conjecture}
  The $q$-supercongruence \eqref{eq:thm2} holds
  modulo $[n]\Phi_{n}(q)^3$.
\end{conjecture}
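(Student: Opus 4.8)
The plan is to prove the conjecture by the method of creative microscoping, carried one parameter further than is needed for Theorem~\ref{thm:2}. The hypothesis $d\geqslant5$ (so that $d-4\geqslant1$) is precisely what permits the introduction of \emph{two} independent deformation parameters $a$ and $b$: I would replace four of the $d$ identical numerator factors $(q^r;q^d)_k$ by $(aq^r;q^d)_k(q^r/a;q^d)_k(bq^r;q^d)_k(q^r/b;q^d)_k$, and deform four denominator factors $(q^d;q^d)_k$ into $(aq^d;q^d)_k(q^d/a;q^d)_k(bq^d;q^d)_k(q^d/b;q^d)_k$. Writing $S_M(a,b)$ for the resulting parametric sum, note that it reduces to the left-hand side of \eqref{eq:thm2} at $a=b=1$, is symmetric under $a\leftrightarrow b$, and is invariant under $a\to1/a$ and $b\to1/b$; moreover the factor $[2dk+r]$ makes it very-well-poised in base $q^d$ with leading parameter $q^r$. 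The goal becomes to show that, for $M=(dn-2n-r)/d$, the rational function $S_M(a,b)$ is divisible by $(1-aq^n)(a-q^n)(1-bq^n)(b-q^n)$.

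For the divisibility, observe that $M=n-(2n+r)/d\leqslant n-1<n$, so none of the denominator $q$-shifted factorials vanishes at $a=q^{\pm n}$ or $b=q^{\pm n}$; these factors are therefore coprime to the relevant denominators and the divisibility is meaningful. Holding $b$ generic and setting $a=q^{-n}$, I would evaluate the resulting terminating very-well-poised sum in closed form by a suitable summation --- Gasper's summation, or the closed form obtained after first applying Andrews' multiseries transformation --- and check that the evaluation carries a vanishing factor exactly when $n\equiv-r/2\pmod d$. This gives $S_M(q^{-n},b)=0$, and the specialization $a=q^n$ follows from the symmetry $a\to1/a$. Hence $S_M(a,b)$ is divisible by $(1-aq^n)(a-q^n)$; by the $a\leftrightarrow b$ symmetry it is likewise divisible by $(1-bq^n)(b-q^n)$, and since the four linear factors are pairwise coprime, their product divides $S_M(a,b)$.

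Letting $a\to1$ and $b\to1$, the product tends to $(1-q^n)^4$, so the undeformed sum in \eqref{eq:thm2} is divisible by $\Phi_n(q)^4$, and in particular by $\Phi_n(q)^3$. To upgrade the modulus to the full $[n]\Phi_n(q)^3$ one must still supply the cyclotomic factors $\Phi_e(q)$ for the proper divisors $e>1$ of $n$. I would do this separately by the standard device of specializing $q$ to a primitive $e$-th root of unity and showing that the truncated sum then splits into complete blocks each annihilated by $\Phi_e(q)$; combining these with the $\Phi_n(q)^4$ already obtained yields divisibility by $[n]\Phi_n(q)^3$. The passage from $M=(dn-2n-r)/d$ to the alternative value $M=n-1$ is then routine, since for $k$ in the range $(dn-2n-r)/d<k\leqslant n-1$ the summand already carries enough factors of $\Phi_n(q)$ (and of the $\Phi_e(q)$) that the tail is divisible by the modulus.

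The decisive difficulty is the second step: producing a parametric summation or transformation strong enough to force $S_M(q^{-n},b)=0$ for generic $b$. Already reaching modulo $[n]\Phi_n(q)$ in Theorem~\ref{thm:2} requires a careful choice of the underlying identity, and carrying a second free parameter $b$ through the same argument --- so that the resulting closed form retains the vanishing factor while remaining summable --- is substantially more demanding; it is quite possible that no single classical summation suffices and that an Andrews-type transformation must be applied first to expose one. A secondary obstacle is the behaviour at $a=b=1$: one must verify that the two deformations are genuinely independent, so that the four linear factors really contribute $(1-q^n)^4$ rather than collapsing, which amounts to controlling the joint expansion of $S_M(a,b)$ about $a=b=1$ to the required order.
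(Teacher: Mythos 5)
The statement you are addressing is an \emph{open conjecture}: the paper provides no proof of it, and explicitly leaves it unresolved after proving only the weaker Theorem~\ref{thm:2} (modulo $[n]\Phi_n(q)$, i.e.\ two powers of $\Phi_n(q)$ short of the conjectured modulus). Your submission is likewise not a proof but a strategy outline, and it contains an essential gap that you yourself identify: the entire argument hinges on showing that the two-parameter deformation satisfies $S_M(q^{-n},b)=0$ identically in $b$, and this is nowhere established --- it is stated as a goal (``evaluate\dots by a suitable summation\dots and check that the evaluation carries a vanishing factor''), with the concession that ``it is quite possible that no single classical summation suffices.'' Since every subsequent step (divisibility by the four linear factors, the limit $a,b\to1$ producing $\Phi_n(q)^4$, the root-of-unity argument for $[n]$) depends on this vanishing, nothing is actually proved. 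Note also that your plan, if it worked, would yield $\Phi_n(q)^4$ rather than the conjectured $\Phi_n(q)^3$ (on top of $[n]$), which is a strong hint that the two deformations are unlikely to be simultaneously exploitable in the way you describe; the secondary obstacle you mention (that the four factors might ``collapse'' at $a=b=1$) is symptomatic of exactly this.

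It is also worth pointing out that your proposed route (creative microscoping \`a la Guo--Zudilin) is methodologically different from what the paper uses for the results it \emph{does} prove: Theorems~\ref{thm:1} and~\ref{thm:2} are obtained by rewriting the truncated sum as a terminating very-well-poised ${}_{d+5}\phi_{d+4}$ series, applying Andrews' multiseries transformation \eqref{andrews}, and then extracting cyclotomic factors from the resulting multi-sum with the help of Gasper's summation via Lemma~\ref{lem:ms=0} and the congruence \eqref{eq:mod-square}. Even that machinery only delivers one power of $\Phi_n(q)$ beyond $[n]$ in the setting of Theorem~\ref{thm:2}. A correct execution of your two-parameter microscoping plan would be a genuinely new contribution, but as written the decisive summation step is missing, so the conjecture remains unproved.
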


We shall prove Theorems~\ref{thm:1} and \ref{thm:2}
in Sections~\ref{sec:thm1} and \ref{sec:thm2}, respectively,
by making use of Andrews' multiseries
extension \eqref{andrews} of the Watson
transformation~\cite[Theorem~4]{Andrews75},
along with Gasper's very-well-poised
Karlsson--Minton type summation \cite[Eq.~(5.13)]{Gasper}.
It should be pointed out that Andrews' transformation
plays an important part in combinatorics and number theory
(see \cite{Guo-c2} and the introduction of \cite{GS1} for more such examples).

\section{Proof of Theorem \ref{thm:1}}\label{sec:thm1}
We need a simple $q$-congruence modulo $\Phi_n(q)^2$, which was
already used in \cite{GS20,GS1}.
\begin{lemma}\label{lem:mod-square}
Let $\alpha$, $r$ be integers and $n$ a positive integer. Then
\begin{equation}
(q^{r-\alpha  n},q^{r+\alpha  n};q^d)_k \equiv (q^r;q^d)_k^2 \pmod{\Phi_n(q)^2}.
\label{eq:mod-square}
\end{equation}
\end{lemma}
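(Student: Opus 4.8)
The plan is to establish the congruence factor by factor in the Laurent polynomial ring $\mathbb{Z}[q,q^{-1}]$, exploiting that $\Phi_n(q)$ divides $q^{\alpha n}-1$ for every integer $\alpha$ (since any primitive $n$-th root of unity $\zeta$ satisfies $\zeta^{\alpha n}=1$). First I would expand both $q$-shifted factorials as products and pair off the $j$-th factors, writing
\[
(q^{r-\alpha n},q^{r+\alpha n};q^d)_k
=\prod_{j=0}^{k-1}(1-q^{r+dj-\alpha n})(1-q^{r+dj+\alpha n}).
\]
Setting $x=q^{r+dj}$ and $y=q^{\alpha n}$, a direct expansion gives the elementary identity
\[
(1-xy^{-1})(1-xy)-(1-x)^2=-xy^{-1}(y-1)^2,
\]
so that
\[
(1-q^{r+dj-\alpha n})(1-q^{r+dj+\alpha n})
=(1-q^{r+dj})^2-q^{r+dj-\alpha n}(q^{\alpha n}-1)^2.
\]

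Since $\Phi_n(q)\mid q^{\alpha n}-1$, the trailing term is divisible by $\Phi_n(q)^2$, whence each paired factor is congruent to $(1-q^{r+dj})^2$ modulo $\Phi_n(q)^2$. Multiplying these congruences over $j=0,\ldots,k-1$ — which is legitimate because a product of factors each congruent modulo $\Phi_n(q)^2$ stays congruent, by the standard telescoping estimate — would then yield
\[
(q^{r-\alpha n},q^{r+\alpha n};q^d)_k
\equiv\prod_{j=0}^{k-1}(1-q^{r+dj})^2
=(q^r;q^d)_k^2\pmod{\Phi_n(q)^2},
\]
which is exactly \eqref{eq:mod-square}.

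I do not expect a genuine obstacle here; the only points requiring a little care are bookkeeping ones. One is that when $r-\alpha n<0$ the symbol $(q^{r-\alpha n};q^d)_k$ involves negative powers of $q$, so the congruence must be read in $\mathbb{Z}[q,q^{-1}]$; this is harmless because $q$ is a unit there and $\gcd(\Phi_n(q),q)=1$. The other is the divisibility $\Phi_n(q)\mid q^{\alpha n}-1$ for negative (or zero) $\alpha$, which follows from $q^{\alpha n}-1=-q^{\alpha n}(q^{-\alpha n}-1)$ together with the unit status of $q^{\alpha n}$. The crux of the argument is simply spotting the term-by-term factorization identity, after which the square of the cyclotomic polynomial appears automatically from the square $(q^{\alpha n}-1)^2$.
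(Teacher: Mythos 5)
Your proof is correct. The paper itself states this lemma without proof, deferring to the cited references \cite{GS20,GS1}, and the argument given there is essentially the one you found: pair the $j$-th factors of the two $q$-shifted factorials and use the identity $(1-q^{a-\alpha n})(1-q^{a+\alpha n})-(1-q^a)^2=-q^{a-\alpha n}(1-q^{\alpha n})^2$, whose right-hand side is divisible by $\Phi_n(q)^2$; your care with negative exponents and with $\alpha\leqslant 0$ is appropriate but, as you say, harmless.
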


We will further utilize a powerful transformation formula due to
Andrews \cite[Theorem~4]{Andrews75}, which may be stated as follows:
\begin{align}
\sum_{k\geqslant 0}\frac{(a,q\sqrt{a},-q\sqrt{a},b_1,c_1,\dots,b_m,c_m,q^{-N};q)_k}
{(q,\sqrt{a},-\sqrt{a},aq/b_1,aq/c_1,\dots,aq/b_m,aq/c_m,aq^{N+1};q)_k}
\left(\frac{a^mq^{m+N}}{b_1c_1\cdots b_mc_m}\right)^k &\notag\\[5pt]
=\frac{(aq,aq/b_mc_m;q)_N}{(aq/b_m,aq/c_m;q)_N}
\sum_{j_1,\dots,j_{m-1}\geqslant 0}
\frac{(aq/b_1c_1;q)_{j_1}\cdots(aq/b_{m-1}c_{m-1};q)_{j_{m-1}}}
{(q;q)_{j_1}\cdots(q;q)_{j_{m-1}}} \notag\\[5pt]
\times\frac{(b_2,c_2;q)_{j_1}\dots(b_m,c_m;q)_{j_1+\dots+j_{m-1}}}
{(aq/b_1,aq/c_1;q)_{j_1}
\dots(aq/b_{m-1},aq/c_{m-1};q)_{j_1+\dots+j_{m-1}}}& \notag\\[5pt]
\times\frac{(q^{-N};q)_{j_1+\dots+j_{m-1}}}
{(b_mc_mq^{-N}/a;q)_{j_1+\dots+j_{m-1}}}
\frac{(aq)^{j_{m-2}+\dots+(m-2)j_1} q^{j_1+\dots+j_{m-1}}}
{(b_2c_2)^{j_1}\cdots(b_{m-1}c_{m-1})^{j_1+\dots+j_{m-2}}}&.  \label{andrews}
\end{align}
This transformation is a multiseries generalization of Watson's
$_8\phi_7$ transformation formula (listed in \cite[Appendix (III.18)]{GR};
cf.\ \cite[Chapter~1]{GR} for the notation of a basic hypergeometric
$_r\phi_s$ series we are using),
\begin{align}
& _{8}\phi_{7}\!\left[\begin{array}{cccccccc}
a,& qa^{\frac{1}{2}},& -qa^{\frac{1}{2}}, & b,    & c,    & d,    & e,    & q^{-n} \\
  & a^{\frac{1}{2}}, & -a^{\frac{1}{2}},  & aq/b, & aq/c, & aq/d, & aq/e, & aq^{n+1}
\end{array};q,\, \frac{a^2q^{n+2}}{bcde}
\right] \notag\\[5pt]
&\quad =\frac{(aq, aq/de;q)_n}
{(aq/d, aq/e;q)_n}
\,{}_{4}\phi_{3}\!\left[\begin{array}{c}
aq/bc,\ d,\ e,\ q^{-n} \\
aq/b,\, aq/c,\, deq^{-n}/a
\end{array};q,\, q
\right], \label{eq:8phi7}
\end{align}
to which it reduces for $m=2$.

Next, we require a very-well-poised Karlsson--Minton type summation
due to Gasper~\cite[Eq.~(5.13)]{Gasper} (see also \cite[Ex.~2.33 (i)]{GR}):
\begin{align}
\sum_{k=0}^\infty\frac{(a,q\sqrt{a},-q\sqrt{a},b,a/b,d,e_1,aq^{n_1+1}/e_1,\dots,
e_m,aq^{n_m+1}/e_m;q)_k}{(q,\sqrt{a},-\sqrt{a},aq/b,bq,aq/d,aq/e_1,e_1q^{-n_1},
\dots,aq/e_m,e_mq^{-n_m};q)_k}\left(\frac{q^{1-\nu}}d\right)^k&\notag\\
=\frac{(q,aq,aq/bd,bq/d;q)_\infty}{(bq,aq/b,aq/d,q/d;q)_\infty}
\prod_{j=1}^m\frac{(aq/be_j,bq/e_j;q)_{n_j}}{(aq/e_j,q/e_j;q)_{n_j}}&,
\label{eq:gasper}
\end{align}
where $n_1,\dots,n_m$ are non-negative integers,
$\nu=n_1+\cdots+n_m$, and the convergence condition $|q^{1-\nu}/d|<1$
if the series does not terminate.
We point out that an elliptic extension of the terminating $d=q^{-\nu}$ case of
\eqref{eq:gasper} can be found in \cite[Eq.~(1.7)]{RS}.

In particular, we note that for $d=bq$ the right-hand side of
\eqref{eq:gasper} vanishes.
Putting in addition $b=q^{-N}$ we get the following terminating summation formula:
\begin{equation}\label{eq:vwp-km}
\sum_{k=0}^N\frac{(a,q\sqrt{a},-q\sqrt{a},e_1,aq^{n_1+1}/e_1,\dots,
e_m,aq^{n_m+1}/e_m,q^{-N};q)_k}{(q,\sqrt{a},-\sqrt{a},aq/e_1,e_1q^{-n_1},
\dots,aq/e_m,e_mq^{-n_m},aq^{N+1};q)_k}q^{(N-\nu)k}=0,
\end{equation}
which is valid for $N>\nu=n_1+\cdots+n_m$.

A suitable combination of \eqref{andrews} and \eqref{eq:vwp-km}
yields the following multi-series summation formula, derived in
\cite[Lemma~2]{GS1} (whose proof we nevertheless give here,
to make the paper self-contained):
\begin{lemma}\label{lem:ms=0}
Let $m\geqslant 2$. Let $q$, $a$ and $e_1,\dots,e_{m+1}$ be arbitrary
parameters with
$e_{m+1}=e_1$, and let $n_1,\dots,n_m$ and $N$ be non-negative integers
such that $N>n_1+\cdots+n_m$. Then
\begin{align}
0=\sum_{j_1,\dots,j_{m-1}\geqslant 0}
\frac{(e_1q^{-n_1}/e_2;q)_{j_1}\cdots(e_{m-1}q^{-n_{m-1}}/e_m;q)_{j_{m-1}}}
{(q;q)_{j_1}\cdots(q;q)_{j_{m-1}}} \notag\\[5pt]
\times\frac{(aq^{n_2+1}/e_2,e_3;q)_{j_1}\dots
(aq^{n_m+1}/e_m,e_{m+1};q)_{j_1+\dots+j_{m-1}}}
{(e_1q^{-n_1},aq/e_2;q)_{j_1}
\dots(e_{m-1}q^{-n_{m-1}},aq/e_m;q)_{j_1+\dots+j_{m-1}}}& \notag\\[5pt]
\times\frac{(q^{-N};q)_{j_1+\dots+j_{m-1}}}
{(e_1q^{n_m-N+1}/e_m;q)_{j_1+\dots+j_{m-1}}}
\frac{(aq)^{j_{m-2}+\dots+(m-2)j_1} q^{j_1+\dots+j_{m-1}}}
{(aq^{n_2+1}e_3/e_2)^{j_1}\cdots
(aq^{n_{m-1}+1}e_m/e_{m-1})^{j_1+\dots+j_{m-2}}}&. \label{mkm0}
\end{align}
\end{lemma}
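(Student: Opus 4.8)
The plan is to derive \eqref{mkm0} by specializing Andrews' transformation \eqref{andrews} in such a way that its left-hand side becomes an instance of the terminating very-well-poised Karlsson--Minton summation \eqref{eq:vwp-km}, which then forces the left-hand side to vanish. Since \eqref{eq:vwp-km} asserts that a particular terminating $_{2m+4}\phi_{2m+3}$-type series equals zero whenever $N>\nu=n_1+\cdots+n_m$, the strategy is to match the summand in \eqref{andrews} to the summand in \eqref{eq:vwp-km} by an appropriate choice of the free parameters $b_1,c_1,\dots,b_m,c_m$ in terms of $a$, $e_1,\dots,e_m$ and the nonnegative integers $n_1,\dots,n_m$. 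Once the left-hand sides coincide, the multisum on the right-hand side of \eqref{andrews} must equal $0$, and after carrying out the parameter substitution on the right-hand side we obtain \eqref{mkm0}.

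Concretely, I would first compare the very-well-poised structure: in \eqref{andrews} the variable pairs are $(b_i,c_i)$ with products $aq/b_i,aq/c_i$, whereas in \eqref{eq:vwp-km} the corresponding pairs are $(e_i,aq^{n_i+1}/e_i)$ with products $aq/e_i,\,e_iq^{-n_i}$. The natural identification is therefore $b_i=e_i$ and $c_i=aq^{n_i+1}/e_i$ for $i=1,\dots,m$; under this choice one has $b_ic_i=aq^{n_i+1}$, so that $aq/b_ic_i=q^{-n_i}$, $aq/b_i=aq/e_i$, and $aq/c_i=e_iq^{-n_i}$, exactly reproducing the denominator entries of \eqref{eq:vwp-km}. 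I would then verify that the power of $q$ carried by the summand matches: the argument $a^mq^{m+N}/(b_1c_1\cdots b_mc_m)$ in \eqref{andrews} becomes $a^mq^{m+N}/(a^mq^{\nu+m})=q^{N-\nu}$, which is precisely the base of the power $q^{(N-\nu)k}$ appearing in \eqref{eq:vwp-km}. The condition $N>\nu$ guaranteeing the vanishing in \eqref{eq:vwp-km} is exactly the hypothesis imposed in the lemma, so the left-hand side of \eqref{andrews} is $0$.

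It then remains to substitute $b_i=e_i$, $c_i=aq^{n_i+1}/e_i$ into the right-hand multisum of \eqref{andrews} and simplify each factor. The key simplifications are: $aq/b_ic_i=q^{-n_i}$ turns the leading factors $(aq/b_ic_i;q)_{j_i}$ into $(q^{-n_i};q)_{j_i}$; the factors $(b_{i+1},c_{i+1};q)_{\cdots}$ become $(e_{i+1},aq^{n_{i+1}+1}/e_{i+1};q)_{\cdots}$; and the prefactor $(aq,aq/b_mc_m;q)_N/(aq/b_m,aq/c_m;q)_N$ simplifies using $aq/b_mc_m=q^{-n_m}$, so that the numerator contains $(q^{-n_m};q)_N$, which vanishes because $N>n_m$. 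This last observation is what produces the overall $0$ on the left of \eqref{mkm0} after one formally divides out; in fact, since the prefactor already vanishes, one works with the raw identity and reads off that the displayed multisum equals $0$. A careful but routine bookkeeping of the remaining $q$-shifted factorials and the explicit $q$-power exponents $j_{m-2}+\dots+(m-2)j_1$ and $(b_ic_i)^{\cdots}$, now $(aq^{n_{i+1}+1}e_{i+2}/e_{i+1})^{\cdots}$, yields the stated form \eqref{mkm0}.

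The main obstacle, such as it is, lies not in any conceptual difficulty but in the meticulous index and parameter tracking required to confirm that every factor on the right-hand side transforms into exactly the expression displayed in \eqref{mkm0}; in particular one must check that the shifted indices on the Pochhammer symbols (the partial sums $j_1+\dots+j_i$) and the cyclic condition $e_{m+1}=e_1$ are consistent after the substitution, and that the somewhat delicate $q$-power prefactor is reproduced verbatim. Because this is a direct specialization rather than an induction or a limiting argument, no genuinely hard step arises; the derivation is complete once the parameter dictionary $b_i=e_i$, $c_i=aq^{n_i+1}/e_i$ has been applied and the elementary identities $aq/b_ic_i=q^{-n_i}$ and $q^{N-\nu}$ for the series argument have been verified.
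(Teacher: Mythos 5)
Your overall strategy---specialize Andrews' transformation \eqref{andrews} so that its left-hand side becomes the vanishing Karlsson--Minton sum \eqref{eq:vwp-km}---is exactly the paper's, but your parameter dictionary is the wrong one, and this is not a cosmetic bookkeeping issue. With $b_i=e_i$, $c_i=aq^{n_i+1}/e_i$ you get $b_ic_i=aq^{n_i+1}$, so the right-hand multisum of \eqref{andrews} acquires factors $(q^{-n_i};q)_{j_i}$, numerator pairs $(e_{i+1},aq^{n_{i+1}+1}/e_{i+1};q)$, denominator pairs $(aq/e_i,e_iq^{-n_i};q)$ and powers $(aq^{n_{i+1}+1})^{\cdots}$ --- none of which agree with the factors $(e_iq^{-n_i}/e_{i+1};q)_{j_i}$, $(aq^{n_{i+1}+1}/e_{i+1},e_{i+2};q)$, $(e_iq^{-n_i},aq/e_{i+1};q)$ and $(aq^{n_{i+1}+1}e_{i+2}/e_{i+1})^{\cdots}$ actually displayed in \eqref{mkm0}. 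The substitution that does reproduce \eqref{mkm0} is the cyclically staggered one, $b_i\mapsto aq^{n_i+1}/e_i$, $c_i\mapsto e_{i+1}$ (with $e_{m+1}=e_1$). It yields the same left-hand side of \eqref{andrews} as your choice does, because both give the same multiset of numerator and denominator parameters and the same argument $q^{N-\nu}$, but a genuinely different right-hand side; so your verification of the left-hand side match, while correct, does not certify the identity claimed in the lemma.

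The second, independent, flaw is your treatment of the prefactor. With your choice, $aq/b_mc_m=q^{-n_m}$, so the prefactor $(aq,aq/b_mc_m;q)_N/(aq/b_m,aq/c_m;q)_N$ contains $(q^{-n_m};q)_N=0$ (since $N>\nu\geqslant n_m$). You notice this but then assert that one can ``read off'' that the multisum vanishes; in fact the identity degenerates to $0=0\cdot S$ and gives no information whatsoever about the multisum $S$. The point of the staggered substitution is precisely that the resulting prefactor $(aq,e_mq^{-n_m}/e_1;q)_N/(e_mq^{-n_m},aq/e_1;q)_N$ is generically nonzero in the free parameters $e_1,\dots,e_m$, so one may legitimately divide by it and conclude that the multisum in \eqref{mkm0} equals $0/(\text{nonzero})=0$. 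As written, your argument establishes only the vanishing of the left-hand side, which is just \eqref{eq:vwp-km} itself, and proves nothing about the sum in \eqref{mkm0}.
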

\begin{proof}
By specializing the parameters in the multi-sum transformation \eqref{andrews}
by $b_i\mapsto aq^{n_i+1}/e_i$, $c_i\mapsto e_{i+1}$, for $1\le i\le m$
(where $e_{m+1}=e_1$), and dividing both sides of
the identity by the prefactor of the multi-sum,
we obtain that the series on the right-hand side of \eqref{mkm0} equals
\begin{align*}
&\frac{(e_mq^{-n_m},aq/e_1;q)_N}{(aq,e_mq^{-n_m}/e_1;q)_N}\\&\times
\sum_{k=0}^N\frac{(a,q\sqrt{a},-q\sqrt{a},e_1,aq^{n_1+1}/e_1,\dots,
e_m,aq^{n_m+1}/e_m,q^{-N};q)_k}{(q,\sqrt{a},-\sqrt{a},aq/e_1,e_1q^{-n_1},
\dots,aq/e_m,e_mq^{-n_m},aq^{N+1};q)_k}q^{(N-\nu)k},
\end{align*}
with $\nu=n_1+\cdots+n_m$.
Now the last sum vanishes by the special case of Gasper's summation stated in
\eqref{eq:vwp-km}.
\end{proof}

Using \cite[Lemma 2.1]{GSnew}, we can prove the following result
which is similar to \cite[Lemma 2.2]{GSnew}.
\begin{lemma}\label{lem:3}
Let $d,n$ be positive integers with $\gcd(d,n)=1$. Let $r$ be an
integer. Then
\begin{align*}
\sum_{k=0}^{m} [2dk+r]\frac{(q^r;q^d)_k^d}{(q^d;q^d)_k^d}q^{d(d-r-2)k/2} \equiv 0\pmod{[n]},   \\[5pt]
\sum_{k=0}^{n-1} [2dk+r]\frac{(q^r;q^d)_k^d}{(q^d;q^d)_k^d}q^{d(d-r-2)k/2} \equiv 0\pmod{[n]},
\end{align*}
where $0\leqslant m\leqslant n-1$ and $dm\equiv -r\pmod{n}$.
\end{lemma}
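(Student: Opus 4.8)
The plan is to prove both congruences by exhibiting, for each summand, a natural pairing partner whose contribution cancels it modulo $[n]$. The key structural observation is that modulo $[n]=\prod_{\gcd(k,n)=1}\Phi_d(q)$-type factors we may work with $q$ being a primitive $n$-th root of unity, so that $q^n\equiv 1$. First I would invoke \cite[Lemma 2.1]{GSnew} (whose precise statement the excerpt defers to, but which is the engine here), which in this setting provides a $q$-analogue of the elementary fact that a very-well-poised terminating sum splits into cancelling pairs. Concretely, the term-pairing sends $k\mapsto m-k$ (in the first assertion) and $k\mapsto n-1-k$-type reflections (in the second), exploiting that $dm\equiv-r\pmod n$ forces the ``midpoint'' factor $[2dk+r]$ to behave antisymmetrically under the reflection.

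Let me make the mechanism explicit. Write
\begin{equation*}
c_k=[2dk+r]\frac{(q^r;q^d)_k^d}{(q^d;q^d)_k^d}q^{d(d-r-2)k/2}.
\end{equation*}
The hypothesis $\gcd(d,n)=1$ guarantees that as $k$ runs over a complete residue system the exponents $dk$ do too, and the congruence $dm\equiv-r\pmod n$ pins down exactly which truncation point $m$ makes the $q$-shifted factorials align. The plan is to verify the ratio $c_{m-k}/c_k$ modulo $[n]$: using the reflection formula $(q^r;q^d)_{m-k}/(q^d;q^d)_{m-k}$ rewritten via $(a;q)_{N-j}=(a;q)_N/((q^{1-N}/a;q)_j(-a)^{-j}q^{\binom j2-(N-1)j})$ together with $q^{dm}\equiv q^{-r}\pmod{\Phi_n(q)}$, one should find $c_{m-k}\equiv -c_k$, so the whole sum is forced to be $\equiv 0\pmod{[n]}$. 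The two forms of the upper limit ($m$ versus $n-1$) are then reconciled by checking that the extra terms $c_{m+1},\dots,c_{n-1}$ themselves cancel in reflected pairs, or vanish outright because $(q^r;q^d)_k$ acquires a factor divisible by $\Phi_n(q)$ once $k$ exceeds the relevant threshold.

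I expect the genuine obstacle to be the bookkeeping of the $q$-power prefactor $q^{d(d-r-2)k/2}$ under the reflection: the half-integer exponent $d(d-r-2)/2$ is an integer precisely because $d$ is odd and $r$ is odd (so $d-r-2$ is even), and one must confirm that the reflected exponent $d(d-r-2)(m-k)/2$ combines with the reversed Pochhammer symbols to reproduce $-c_k$ rather than some stray power of $q$. Getting this sign and the residual $q$-power to match modulo $[n]$, uniformly in the parity of $r$ and for possibly negative $r$, is where the calculation is delicate; everything else is a routine application of the $q$-Pochhammer reflection identities. Since the excerpt explicitly says the result ``is similar to \cite[Lemma 2.2]{GSnew},'' I would structure the argument to parallel that proof, citing \cite[Lemma 2.1]{GSnew} for the pairing lemma and reducing the new content to the verification that the present parameters $(d,r)$ satisfy its hypotheses.
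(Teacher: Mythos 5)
Your proposal matches the paper's (omitted) argument: the paper simply cites \cite[Lemma 2.1]{GSnew} and notes the proof is analogous to \cite[Lemma 2.2]{GSnew}, which is precisely the reflection $k\mapsto m-k$ giving $c_{m-k}\equiv -c_k$ modulo $\Phi_{n'}(q)$ for each divisor $n'>1$ of $n$, with the terms beyond $m$ killed by the factor $(q^r;q^d)_k^d$ being divisible by $\Phi_{n'}(q)$. This is essentially the same approach, so no further comparison is needed.
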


We have collected enough ingredients which enables us
to prove Theorem~\ref{thm:1}.

\begin{proof}[Proof of Theorem \ref{thm:1}]
The $q$-congruence \eqref{eq:thm1} modulo $[n]$ follows from Lemma \ref{lem:3} immediately.
In what follows, we shall prove the modulus $\Phi_n(q)^3$ case of \eqref{eq:thm1}.

For $M=(dn-n-r)/d$, the left-hand side of \eqref{eq:thm1} can be written as the following
multiple of a terminating $_{d+5}\phi_{d+4}$ series:
\begin{align*}
[r]
\sum_{k=0}^{(dn-n-r)/d}\frac{(q^r,q^{d+r/2},-q^{d+r/2},
q^r,\ldots,q^r,q^{(d+r)/2},q^{d+(d-1)n},q^{r-(d-1)n};q^d)_k}
{(q^d,q^{r/2},-q^{r/2},q^d,\ldots,q^d,q^{(d+r)/2},q^{r-(d-1)n},q^{d+(d-1)n};q^d)_k}
q^{d(d-r-2)k/2}.
\end{align*}
Here, the $q^r,\ldots,q^r$ in the numerator
means $d-1$ instances of $q^r$, and similarly, the $q^d,\ldots,q^d$
in the denominator means $d-1$ instances of $q^d$.
By Andrews' transformation \eqref{andrews},
we may rewrite the above expression as
\begin{align}
[r]\frac{(q^{d+r},q^{(r-d)/2-(d-1)n};q^d)_{(dn-n-r)/d}}
{(q^{(d+r)/2},q^{r-(d-1)n};q^d)_{(dn-n-r)/d}}
\sum_{j_1,\dots,j_{m-1}\geqslant 0}
\frac{(q^{d-r};q^d)_{j_1}\cdots(q^{d-r};q^d)_{j_{m-1}}}
{(q^d;q^d)_{j_1}\cdots(q^d;q^d)_{j_{m-1}}}& \notag\\[5pt]
\times\frac{(q^r,q^r;q^d)_{j_1}\dots (q^r,q^r;q^d)_{j_1+\dots+j_{m-2}}
 (q^{(d+r)/2},q^{d+(d-1)n};q^d)_{j_1+\dots+j_{m-1}}}
{(q^d,q^d;q^d)_{j_1}
\dots(q^d,q^d;q^d)_{j_1+\dots+j_{m-1}}}& \notag\\[5pt]
\times\frac{(q^{r-(d-1)n};q^d)_{j_1+\dots+j_{m-1}}}
{(q^{(3d+r)/2};q^d)_{j_1+\dots+j_{m-1}}}
q^{(d-r)(j_{m-2}+\dots+(m-2)j_1)+d(j_1+\dots+j_{m-1})}
&, \label{eq:multi}
\end{align}
where $m=(d+1)/2$.

It is easy to see that the $q$-shifted factorial $(q^{d+r};q^d)_{(dn-n-r)/d}$
contains the factor $1-q^{(d-1)n}$ which is a multiple of $1-q^n$.
Moreover, since none of $(r-d)/2$, $(d+r)/2$ and $(d+r)/2+dn-n-r-d$
are multiples of $n$, the $q$-shifted factorials
$$
(q^{(r-d)/2-(d-1)n};q^d)_{(dn-n-r)/d}
\quad \text{and}
\quad
(q^{(d+r)/2};q^d)_{(dn-n-r)/d}
$$
have the same number ($0$ or $1$) of factors of the form
$1-q^{\alpha n}$ ($\alpha\in\mathbb{Z}$).
Besides, the $q$-shifted factorial $(q^{r-(d-1)n};q^d)_{(dn-n-r)/d}$
is relatively prime to $\Phi_n(q)$. Thus we conclude that the fraction
before the multi-sum \eqref{eq:multi} is congruent to $0$ modulo $\Phi_n(q)$.

Note that the non-zero terms in the multi-summation in \eqref{eq:multi}
are those indexed by $(j_1,\ldots,j_{m-1})$ that satisfy the inequality
$j_1+\dots+j_{m-1}\leqslant (dn-n-r)/d$ because the
factor $(q^{r-(d-1)n};q^d)_{j_1+\dots+j_{m-1}}$ appears in the numerator.
None of the factors appearing in the denominator
of the multi-sum of \eqref{eq:multi} contain a factor of the form
$1-q^{\alpha n}$ (and are therefore relatively prime to $\Phi_n(q)$),
except for $(q^{(3d+r)/2};q^d)_{j_1+\dots+j_{m-1}}$ when
$$
(dn-d-n-r)/(2d)\leqslant j_1+\dots+j_{m-1}\leqslant (dn-n-r)/d.
$$
Since
$$
\frac{(q^{(d+r)/2};q^d)_{j_1+\dots+j_{m-1}}}
{(q^{(3d+r)/2};q^d)_{j_1+\dots+j_{m-1}}}
=\frac{1-q^{(d+r)/2}}{1-q^{(d+r)/2+(j_1+\dots+j_{m-1})d}},
$$
the denominator of the above fraction contains
a factor of the form $1-q^{\alpha n}$ if and only if
$j_1+\dots+j_{m-1}=(dn-d-n-r)/(2d)$ (in this case, the
denominator contains the factor $1-q^{(d-1)n/2}$).
Writing $n=ad-r$ (with $a\geqslant 1$),
we have $j_1+\dots+j_{m-1}=a(d-1)/2-(r+1)/2$.
Noticing that $m-1=(d-1)/2$ and $r\leqslant d-4$, there must exist an $i$
such that $j_i\geqslant a$.
Then $(q^{d-r};q^d)_{j_i}$ has the factor $1-q^{d-r+d(a-1)}=1-q^n$
which is divisible by $\Phi_n(q)$.
Hence the denominator of the reduced form of the multi-sum in
\eqref{eq:multi} is relatively prime to $\Phi_n(q)$.
It remains to show that the multi-sum in \eqref{eq:multi},
without the previous fraction, is congruent to $0$ modulo $\Phi_n(q)^2$.

By repeated applications of Lemma~\ref{lem:mod-square},
the multi-sum in \eqref{eq:multi}
(without the previous fraction),  modulo $\Phi_n(q)^2$, is congruent to
\begin{align*}
\sum_{j_1,\dots,j_{m-1}\geqslant 0}q^{(d-r)(j_{m-2}+\dots+(m-2)j_1)+d(j_1+\dots+j_{m-1})}
\frac{(q^{d-r};q^d)_{j_1}\cdots(q^{d-r};q^d)_{j_{m-1}}}
{(q^d;q^d)_{j_1}\cdots(q^d;q^d)_{j_{m-1}}}& \notag\\[5pt]
\times\frac{(q^{r+(m+1)n},q^{r-(m+1)n};q^d)_{j_1}\dots
(q^{r+(2m-2)n},q^{r-(2m-2)n};q^d)_{j_1+\dots+j_{m-2}}
}
{(q^{d-mn},q^{d+mn};q^d)_{j_1}
\dots(q^{d-(2m-3)n},q^{d+(2m-3)n};q^d)_{j_1+\dots+j_{m-2}}
}& \notag\\[5pt]
  \times \frac{(q^{d+(d-1)n},q^{(d+r)/2};q^d)_{j_1+\dots+j_{m-1}}
  (q^{r-(d-1)n};q^d)_{j_1+\dots+j_{m-1}}}
  {(q^{d-(2m-2)n},q^{d+(2m-2)n};q^d)_{j_1+\dots+j_{m-1}}
  (q^{(3d+r)/2};q^d)_{j_1+\dots+j_{m-1}}}
&,
\end{align*}
where $m=(d+1)/2$. However, this sum vanishes in light of the
$m=(d+1)/2$, $q\mapsto q^d$, $a= q^r$,
$e_1=q^{(d+r)/2}$, $e_m=q^{r-(2m-2)n}$, $e_i= q^{r-(m+i-2)n}$,
$n_1=(dn-d+n+r)/(2d)$,  $n_m=0$,
$n_i= (n+r-d)/d$, $2\leqslant i\leqslant m-1$, $N=(dn-n-r)/d$ case
of Lemma~\ref{lem:ms=0}. (It is easy to verify that
$N-n_1-\cdots-n_m=d(d-r-2)/2>0$.)
This proves that \eqref{eq:thm1} holds modulo $\Phi_n(q)^3$ for $M=(dn-n-r)/d$.

Since $(q^r;q^d)_k/(q^d;q^d)_k$ is congruent to $0$ modulo $\Phi_n(q)$
for $(dn-n-r)/d<k\leqslant n-1$,
we conclude that \eqref{eq:thm1} also holds modulo $\Phi_n(q)^3$ for $M=n-1$.
\end{proof}

\section{Proof of Theorem \ref{thm:2}}\label{sec:thm2}
We first give a simple lemma on a property of certain arithmetic progressions.
\begin{lemma}\label{lem:4}
Let $d$ and $r$ be odd integers satisfying $d\geqslant 3$, $r\leqslant d-4$
and $\gcd(d,r)=1$. Let $n$ be an integer such that $n\geqslant (d-r)/2$
and $n\equiv-r/2\pmod{d}$. Then there are no multiples of $n$ in the
arithmetic progression
\begin{equation}
  \frac{d+r}{2},\ \frac{d+r}{2}+d,\ldots, \frac{d+r}{2}+dn-2n-r-d.
  \label{eq:arithmetic}
\end{equation}
\end{lemma}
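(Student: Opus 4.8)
The plan is to show that the arithmetic progression in \eqref{eq:arithmetic} avoids all multiples of $n$ by analyzing the residues modulo $n$ of its terms and counting how many terms the progression contains. The terms are $\frac{d+r}{2}+d\ell$ for $\ell=0,1,\ldots,L$, where the last term $\frac{d+r}{2}+dn-2n-r-d$ corresponds to $L=n-2-\frac{r+1}{d}\cdot$(something); more usefully, I would first determine $L$ cleanly. Since the final term is $\frac{d+r}{2}+d\bigl(n-2-(r+1)/d\bigr)$ — I would instead just write $dL = dn-2n-r-d$, solving $L=(dn-2n-r-d)/d = n-2-(r-d)/d\cdot$ after simplification; the cleanest route is to note the number of terms is $L+1$ and to compute it exactly from the endpoints. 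So the first concrete step is: verify that $dL=dn-2n-r-d$ gives an integer $L$ using the hypothesis $n\equiv -r/2\pmod d$ (equivalently $2n+r\equiv 0\pmod d$), and record the total number of terms.

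\smallskip

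\textbf{The key residue computation.} The term $\frac{d+r}{2}+d\ell$ is a multiple of $n$ if and only if $\frac{d+r}{2}+d\ell\equiv 0\pmod n$ for some $\ell$ in range. I would study the map $\ell\mapsto \frac{d+r}{2}+d\ell\pmod n$. Because I expect $\gcd(d,n)=1$ here (which follows from $\gcd(d,r)=1$ together with $2n+r\equiv 0\pmod d$, since any common prime divisor of $d$ and $n$ would then divide $r$), the step size $d$ is invertible modulo $n$, so as $\ell$ ranges over a full residue system mod $n$ the terms hit every residue exactly once. Hence a multiple of $n$ occurs among the $L+1$ consecutive terms precisely when $L+1\geqslant n$, \emph{or} when the unique solution $\ell_0$ of $\frac{d+r}{2}+d\ell\equiv 0\pmod n$ with $0\leqslant \ell_0<n$ actually lies in $\{0,1,\ldots,L\}$. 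So the decisive task is to locate $\ell_0$ and show it exceeds $L$.

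\smallskip

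\textbf{Locating $\ell_0$.} To find $\ell_0$ I solve $2d\ell\equiv -(d+r)\pmod{2n}$ (clearing the factor $2$ to avoid half-integers), or equivalently work directly with the congruence $\tfrac{d+r}{2}+d\ell\equiv 0\pmod n$. Using $2n\equiv -r\pmod d$ and, conversely, translating the modular condition into an integer equation $\frac{d+r}{2}+d\ell=n t$ for some integer $t$, I would show any such solution forces $\ell$ to be at least $n-1$ (roughly, the smallest positive multiple of $n$ of the correct residue class sits just beyond the top of the progression). Concretely, since the largest term is $\frac{d+r}{2}+dn-2n-r-d < dn-2n+\frac{d-r}{2}$ and the relevant multiple of $n$ one must rule out is $(d-2)n$ (the natural candidate, as $\frac{d+r}{2}+d\ell\approx (d-2)n$ at the top), I would verify $\frac{d+r}{2}+d\ell=(d-2)n$ has no solution with $\ell\leqslant L$ by checking the residue of $(d-2)n$ against the arithmetic-progression residues, using $n\equiv -r/2\pmod d$ to compute $(d-2)n\equiv (d-2)(-r/2)\equiv r\pmod d$ and comparing with $\frac{d+r}{2}\pmod d$.

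\smallskip

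\textbf{Expected obstacle.} The delicate point is the boundary case: the progression is designed to stop exactly one step short of the first multiple of $n$, so the inequality must be tight and I expect the constraint $r\leqslant d-4$ to be precisely what guarantees the top term stays strictly below the offending multiple of $n$ rather than hitting it. The main work is therefore a careful endpoint estimate — confirming that $\frac{d+r}{2}+dL$ is strictly less than the least positive multiple of $n$ in the correct residue class, with the gap controlled by $r\leqslant d-4$ and $n\geqslant (d-r)/2$. I would finish by combining the residue uniqueness (from $\gcd(d,n)=1$) with this strict endpoint inequality to conclude no term can be a multiple of $n$.
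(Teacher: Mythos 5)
Your overall framework---observing that every term of the progression lies in a single residue class modulo $d$ (namely $(d+r)/2$, i.e.\ the class $x$ with $2x\equiv r\pmod d$) and testing which multiples of $n$ can lie in that class---is essentially the paper's strategy, and your reduction via $\gcd(d,n)=1$ to a single candidate index $\ell_0$ is sound. But the execution has a genuine gap: you only rule out the one multiple $(d-2)n$, and that is in fact the wrong candidate. Writing a hypothetical term as $\frac{d+r}{2}+ad=bn$, the lower endpoint together with $n\geqslant(d-r)/2$ gives $b\geqslant 0$, and the range of the progression (which starts near $(d+r)/2$, possibly negative, and ends near $(d-2)n$) a priori admits \emph{every} $b\in\{0,1,\dots,d-2\}$; checking $b=d-2$ alone excludes nothing else. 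Equivalently, your unique $\ell_0\in[0,n)$ corresponds to the multiple $(d-1)n$, not $(d-2)n$: reducing $\frac{d+r}{2}+ad=bn$ modulo $d$ and using $n\equiv(d-r)/2$ gives $b\cdot\frac{d-r}{2}\equiv\frac{d+r}{2}$, hence $-br\equiv r$ and so $b\equiv-1\pmod d$ by $\gcd(d,r)=1$; with $b\geqslant 0$ this forces $b\geqslant d-1$. The proof is then finished by the endpoint estimate
$$\frac{d+r}{2}+dn-2n-r-d=(d-2)n+\frac{d-r}{2}-d\leqslant(d-1)n-d<(d-1)n,$$
which uses $n\geqslant(d-r)/2$.

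Two smaller corrections to your ``expected obstacle'' paragraph: the decisive inequality is not tight (the gap to $(d-1)n$ is at least $d$), and it is controlled by the hypothesis $n\geqslant(d-r)/2$ rather than by $r\leqslant d-4$. Your residue computation $(d-2)n\equiv r\pmod d$ versus $\frac{d+r}{2}\equiv r/2\pmod d$ is correct as far as it goes, but it only shows $b=d-2$ is impossible; the same computation carried out for general $b$ is exactly the congruence $b\equiv-1\pmod d$ above, and that is the step your write-up is missing.
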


\begin{proof}
By the condition $\gcd(d,r)=1$, we have $\gcd((d+r)/2,(d-r)/2)=1$.
Suppose that
\begin{equation}
(d+r)/2+ad=bn  \label{eq:abn}
\end{equation}
for some integers $a$ and $b$ with $a\geqslant 0$.
Then $(d+r)/2+ad> (r-d)/2\geqslant -n$ and so $b\geqslant 0$.
Since $n\equiv (d-r)/2\pmod{d}$, we deduce from \eqref{eq:abn}
that $b\equiv -1\pmod{d}$ and thereby $b\geqslant d-1$.
But we have
$$
\frac{d+r}{2}+dn-2n-r-d
=dn-2n+\frac{d-r}{2}-d\leqslant (d-1)n-d,
$$
thus implying that no number in the arithmetic progression
\eqref{eq:arithmetic} is a multiple of $n$.
\end{proof}

\begin{proof}[Proof of Theorem \ref{thm:2}]
  As before, the $q$-congruence \eqref{eq:thm2} modulo $[n]$
  can be deduced from Lemma \ref{lem:3}.
It remains to prove the modulus $\Phi_n(q)^2$ case of \eqref{eq:thm2}.

For $M=(dn-2n-r)/d$, the left-hand side of \eqref{eq:thm2} can be written
as the following multiple of a terminating $_{d+5}\phi_{d+4}$ series
(this time we changed the position of $q^{(d+r)/2}$):
\begin{align*}
[r]
\sum_{k=0}^{(dn-2n-r)/d}\frac{(q^r,q^{d+r/2},-q^{d+r/2},q^{(d+r)/2},
q^r,\ldots,q^r,q^{d+(d-2)n},q^{r-(d-2)n};q^d)_k}
{(q^d,q^{r/2},-q^{r/2},q^{(d+r)/2},q^d,\ldots,q^d,q^{r-(d-2)n},q^{d+(d-2)n};q^d)_k}
q^{d(d-r-2)k/2}.
\end{align*}
Here, the $q^r,\ldots,q^r$ in the numerator
stands for $d-1$ instances of $q^r$, and similarly, the $q^d,\ldots,q^d$
in the denominator stands for $d-1$ instances of $q^d$.
By Andrews' transformation \eqref{andrews},
we may rewrite the above expression as
\begin{align}
[r]\frac{(q^{d+r},q^{-(d-2)n};q^d)_{(dn-2n-r)/d}}
{(q^d,q^{r-(d-2)n};q^d)_{(dn-2n-r)/d}}
\sum_{j_1,\dots,j_{m-1}\geqslant 0}
\frac{(q^{(d-r)/2};q^d)_{j_1}(q^{d-r};q^d)_{j_2}\cdots(q^{d-r};q^d)_{j_{m-1}}}
{(q^d;q^d)_{j_1}(q^d;q^d)_{j_2}\cdots(q^d;q^d)_{j_{m-1}}}& \notag\\[5pt]
\times\frac{(q^r,q^r;q^d)_{j_1}\dots (q^r,q^r;q^d)_{j_1+\dots+j_{m-2}}
 (q^r,q^{d+(d-2)n};q^d)_{j_1+\dots+j_{m-1}}}
{(q^{(d+r)/2},q^d;q^d)_{j_1}(q^d,q^d;q^d)_{j_1+j_2}
\dots(q^d,q^d;q^d)_{j_1+\dots+j_{m-1}}}& \notag\\[5pt]
\times\frac{(q^{r-(d-2)n};q^d)_{j_1+\dots+j_{m-1}}}
{(q^{d+r};q^d)_{j_1+\dots+j_{m-1}}}
q^{(d-r)(j_{m-2}+\dots+(m-2)j_1)+d(j_1+\dots+j_{m-1})}
&, \label{eq:multi-3}
\end{align}
where $m=(d+1)/2$.

It is easily seen that the $q$-shifted factorial $(q^{d+r};q^d)_{(dn-2n-r)/d}$
has the factor $1-q^{(d-2)n}$ which is a multiple of $1-q^n$.
Clearly, the $q$-shifted factorial $(q^{-(d-2)n};q^d)_{(dn-2n-r)/d}$
has the factor $1-q^{-(d-1)n}$ (again being a multiple of $1-q^n$)
since $(dn-2n-r)/d\geqslant 1$ holds according to the conditions
$d\geqslant 5$, $r\leqslant d-4$, and $n\geqslant (d-r)/2$.
This indicates that the $q$-factorial
$(q^{d+r},q^{-(d-2)n};q^d)_{(dn-2n-r)/d}$ in the numerator of the
fraction before the multi-sum in \eqref{eq:multi-3} is divisible by $\Phi_n(q)^2$.
Further, it is not difficult to see that the $q$-factorial
$(q^d,q^{r-(d-2)n};q^d)_{(dn-2n-r)/d}$
in the denominator is relatively prime to $\Phi_n(q)$.

Like the proof of Theorem \ref{thm:1}, the non-zero terms
in the multi-sum in \eqref{eq:multi-3}
are those indexed by $(j_1,\ldots,j_{m-1})$ satisfying the inequality
$j_1+\dots+j_{m-1}\leqslant (dn-2n-r)/d$ because of the appearance of the
factor $(q^{r-(d-2)n};q^d)_{j_1+\dots+j_{m-1}}$ in the numerator.
By Lemma \ref{lem:4}, the $q$-shifted factorial $(q^{(d+r)/2},q^d)_{j_1}$
in the denominator does not contain a factor of the form $1-q^{\alpha n}$ for
$j_1\leqslant (dn-2n-r)/d$ (and are therefore relatively prime to $\Phi_n(q)$).
In addition, none of the other factors appearing in the denominator
of the multi-sum of \eqref{eq:multi-3} contain a
factor of the form $1-q^{\alpha n}$,
except for $(q^{d+r};q^d)_{j_1+\dots+j_{m-1}}$
when $j_1+\dots+j_{m-1}=(dn-2n-r)/d$
(in this case the denominator contains the factor $1-q^{(d-2)n}$).

Letting $n=ad+(d-r)/2$ (with $a\geqslant 0$),
we get $j_1+\dots+j_{m-1}=a(d-2)+(d-r)/2-1$. If $j_1\geqslant a+1$,
then $(q^{(d-r)/2};q^d)_{j_1}$
contains the factor $1-q^{(d-r)/2+ad}=1-q^n$. If $j_1\leqslant a$,
then $j_2+\dots+j_{m-1}\geqslant a(d-3)+(d-r)/2-1$.
Since  $m-2=(d-3)/2$, $d\geqslant 5$, and $r\leqslant d-4$,
there must be an $i$ with $2\leqslant i\leqslant m-1$ and $j_i\geqslant 2a+1$.
Then $(q^{d-r};q^d)_{j_i}$ contains the factor $1-q^{d-r+2ad}=1-q^{2n}$
which is a multiple of $\Phi_n(q)$.
Therefore, the denominator of the reduced form of the multi-sum in
\eqref{eq:multi-3} is relatively prime to $\Phi_n(q)$.
\end{proof}

\end{document}